\theoremstyle{theorem}
\newtheorem{lemma}{Lemma}
\newtheorem{theorem}{Theorem}
\newtheorem{corollary}{Corollary}
\newtheorem{proposition}{Proposition}
\theoremstyle{remark}
\newcommand{\R}{{\mathbb R}}
\newcommand{\N}{{\mathbb N}}
\newcommand{\C}{{\mathbb C}}
\newcommand{\Z}{{\mathbb Z}}
\begin{document}

\title[Unique continuation for discrete harmonic functions]{A note on unique continuation for discrete harmonic functions}
\author{Maru Guadie and Eugenia Malinnikova
}  
\address{Department of Mathematical Sciences,
\newline
Norwegian University of Science and Technology,
\newline
7491, Trondheim, Norway}
\email{guadie@math.ntnu.no, eugenia@math.ntnu.no
}
\keywords{Discrete Laplace operator, logarithmic convexity, three sphere theorem, Lagrange interpolation, Chebyshev nodes}
\subjclass[2010]{65N22, 65N12, 35J05}

\begin{abstract} 
We suggest an elementary quantitative unique continuation argument for harmonic functions that can be adapted to the discrete case of harmonic functions on the lattice. The analog of the three balls theorem for discrete harmonic functions that we obtain contains an additional term that depends on the mesh size of the lattice and goes to zero when the mesh size goes to zero. We also show that any discrete harmonic function on a cube can be extended to the whole lattice by a discrete harmonic polynomial.
\end{abstract}
\maketitle

\section{Introduction} 
Quantitative unique continuation is an important tool in the study of solutions of elliptic and parabolic problems. It has many applications, including stability estimates for the Cauchy problem, see \cite{ARRV}. The simplest quantitative unique continuation statement is the three balls theorem. For classical harmonic functions it follows from logarithmic convexity of the $L^2$-norms, that in turn is obtained using the rotational symmetry and ellipticity of the Laplace operator and can be proved by expansions in eigenfunctions of the Laplace-Beltrami operator on the sphere \cite{KM}. However, the logarithmic convexity can be generalized to general elliptic equations and it has been successfully used in unique continuation problems, \cite{A, GL}.

Situation with unique continuation changes drastically when one considers discrete models for elliptic equations. It is easy to construct a discrete harmonic function (even a discrete harmonic polynomial, see the last section of this note) on the lattice $\Z^n$  that vanishes on a large cube of the lattice without being zero identically. On the other hand  some version of unique continuation should hold at least when we fix the domains and let the mesh size of the lattice go to zero, since discrete harmonic functions then approximate continuous ones. One of the obstacles for an elementary estimate similar to logarithmic convexity for continuous harmonic functions is that discrete Laplacian is not rotationally symmetric.

 Recently quantitative uniqueness (from Cauchy data)  for discrete models of elliptic PDEs were obtained by Carleman type inequalities, we refer the reader to  \cite{BHL,EG} for motivation and interesting results; we mention also an earlier work \cite{KS} that contains discrete Carleman estimates. It is known that the mesh size of the discretization appears in propagation of smallness inequalities obtained by this method. The  Carleman inequality is the most common tool  for quantitative unique continuation due to its flexibility. It can be adjusted to quite general setting. However even for the simplest case of the five-point discrete Laplacian on $\Z^2$ the quantitative uniqueness by Carleman estimates is very technical. We suggest another approach that is based on the analyticity of the Poisson kernel and thus can be applied only for the case of good equations but is simple and gives direct constructive estimates for discrete harmonic functions on $n$-dimensional lattices.   

We consider the standard lattice $(h\Z)^n$ in $n$-dimensional space $\R^n$, we always assume that $N=h^{-1}$ is a positive integer. A function $u:(h\Z)^n\rightarrow\R$ is called $h$-discrete harmonic at a point 
$x\in (h\Z)^n$ if 
\[2n u(x)=\sum_{j=1}^n u(x+he_j)+u(x-he_j),\]
where $\{e_j\}_{j=1}^n$ is the standard orthonormal basis for $\R^n$. Different logarithmic convexity estimates for the norms of discrete harmonic functions, with norms taken over parallel segments or parallel lines, can be found in \cite{FM, RHH, G, GM}.

In this note we obtain an analog of the three balls theorem for discrete harmonic functions.
We define by $Q_d$ the cube $[-d,d]^n\subset\R^n$ and by $Q_d^h$ its discretization, $Q_d^h=Q_d\cap(h\Z)^n$. More generally for any set $E\subset\R^n$ we denote $E^h=E\cap(h\Z)^n$. Then our main result is the following
\begin{theorem}\label{th:m}
Suppose that $r<R<1$. There exist positive constants $C, N_0, \delta, \alpha$ that depend on $r,R$ with $\alpha,\delta<1$ such that for any $h=N^{-1}$, $N\in \N$, $N>N_0$ and any
$h$-discrete harmonic function $u$ in $Q_1^h$ that satisfies $\max_{Q_r^h}|u(x)|\le\varepsilon$ and $\max_{Q_1^h}|u(x)|\le M$ the following inequality holds
\[
\max_{Q_R^h}|u(x)|\le C(\varepsilon^\alpha M^{1-\alpha}+\delta^{\sqrt{N}}M).\]
\end{theorem}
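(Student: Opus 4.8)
The plan hinges on the authors' own stated strategy: exploit the analyticity of the Poisson kernel to transfer a three-balls inequality for continuous harmonic functions to the discrete setting, paying a price that is exponentially small in $N = h^{-1}$. So the plan is to proceed in three stages.

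Accordingly I would organize the argument in three stages.

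First I would establish (or invoke) the continuous three balls theorem in the form $\max_{Q_R}|v|\le C(\max_{Q_r}|v|)^\alpha(\max_{Q_1}|v|)^{1-\alpha}$ for functions $v$ harmonic in $Q_1$. The elementary route suggested by the abstract avoids spherical harmonics: one writes $v$ on an interior cube through the Poisson kernel, which is real-analytic in the interior variable, and deduces that the restriction of $v$ to a segment extends analytically to a fixed complex neighbourhood with norm controlled by $\max_{Q_1}|v|$. Logarithmic convexity along the segment then follows from a Hadamard three-circle estimate for this analytic extension, and combining the estimates over a family of parallel segments and over directions upgrades this to the stated inequality for the sup-norms over the cubes.

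The second and central stage is to manufacture, from the discrete data, a genuine continuous harmonic function — concretely a harmonic polynomial $p$ of some degree $m$ — that reproduces $u$ on the grid up to an error of size $\delta^{\sqrt N}M$. Here I would use Lagrange interpolation at Chebyshev nodes: the Chebyshev placement makes the Lebesgue constant grow only like $\log m$ and, more importantly, turns analyticity of the Poisson kernel into geometric decay of the interpolation error, $\sim q^{m}$ with $q<1$ depending only on $r,R$. The values of $u$ are available only on the lattice, so the interpolant must be built from these samples; the discrepancy between the discrete Laplacian and the continuous one (a local truncation error of order $h^2$ acting on fourth derivatives) is what separates $p$ from being exactly harmonic and from exactly matching $u$. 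Once $p$ is in hand I would apply Stage 1 to $p$, bound $\max_{Q_r}|p|\le\varepsilon+\delta^{\sqrt N}M$ and $\max_{Q_1}|p|\le M+\delta^{\sqrt N}M$, and finally convert back, $\max_{Q_R^h}|u|\le\max_{Q_R}|p|+\delta^{\sqrt N}M$; absorbing the power via $(\varepsilon+\delta^{\sqrt N}M)^\alpha M^{1-\alpha}\le C(\varepsilon^\alpha M^{1-\alpha}+\delta^{\alpha\sqrt N}M)$ produces exactly the claimed form, after renaming $\delta^\alpha$ as $\delta$.

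The main obstacle — and the reason the error is $\delta^{\sqrt N}$ rather than a power of $h$ — is the optimisation of the polynomial degree $m$ against the mesh. Increasing $m$ improves the analytic interpolation error like $q^m$, but the passage from discrete to continuous amplifies this: by Markov's inequality the fourth derivatives of a degree-$m$ polynomial grow polynomially in $m$, so the truncation mismatch between $u$ and the harmonic polynomial behaves like $C^m h^2 m^{c}=C^m m^{c}/N^2$, which forces $m\lesssim\sqrt N$. Balancing the geometric gain $q^m$ against this constraint yields the optimal choice $m\asymp\sqrt N$ and hence the factor $\delta^{\sqrt N}$. I expect the delicate points to be quantifying the derivative amplification cleanly — that is, controlling how far $u$ sits from harmonic polynomials of degree $m$ on the whole grid rather than merely in the interior — and keeping all constants dependent only on $r$ and $R$; the lack of rotational symmetry of the discrete Laplacian is precisely what rules out a direct symmetry-based proof and makes this interpolation detour necessary.
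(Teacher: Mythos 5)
Your Stage~2 is where the whole proof lives, and it cannot work as described, either quantitatively or qualitatively. Quantitatively: any discrete-to-continuum scheme whose error is governed by the truncation error of the five-point Laplacian (your ``$h^2$ acting on fourth derivatives'') produces a mismatch between $u$ and the continuum object of size at least order $h^2=N^{-2}$, which is vastly larger than the $\delta^{\sqrt N}$ the theorem demands; so even if you could build $p$, you would prove a weaker statement with error term polynomial in $1/N$. Your own arithmetic confirms this: a mismatch $C^m m^c/N^2$ with $C>1$ does not force $m\lesssim\sqrt N$ --- it \emph{diverges} like $C^{\sqrt N}$ when $m\asymp\sqrt N$, and balancing it against the gain $q^m$ gives $m\asymp\log N$, hence an error term $N^{-c}M$, never $\delta^{\sqrt N}M$. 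Qualitatively: the geometric decay $q^m$ of Chebyshev interpolation requires the interpolated function to extend analytically to a fixed complex neighbourhood, but the only data you have is the lattice function $u$, which has no canonical analytic extension; the ``analyticity of the Poisson kernel'' you invoke has no referent in your scheme, since your interpolant is built from samples of $u$, not of a kernel. Moreover, a polynomial matching $u$ at $m$ interpolation nodes is unconstrained at all other lattice points, so the discrete harmonicity of $u$ gives you no control of the Laplacian defect of $p$; that part of the construction is circular.

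The paper's proof removes the continuum middleman entirely, and this is the idea your proposal is missing. One writes $u(x)=\sum_{y\in\partial Q_1^h}u(y)P_h(x,y)$ with the \emph{explicit discrete} Poisson kernel of the cube (a finite $\sin$/$\sinh$ series) and proves (Proposition~\ref{pr:ext}) that this kernel, as a function of a single coordinate, extends holomorphically to a fixed complex strip with bound $CN^{1-n}$: the analyticity is carried by the kernel itself, not by $u$ or any approximant, so no comparison between discrete and continuous Laplacians ever occurs. Then, exactly as in the continuous case, $P_h(\cdot,y)$ at a point of $Q_R$ is approximated by a Lagrange combination of its values at points of $Q_r$ --- but the nodes must now be lattice points, and this is the true source of the exponent $\sqrt N$: Chebyshev nodes for degree $m$ have minimal separation of order $m^{-2}$, so they can be perturbed onto the lattice only if $m^{-2}\gtrsim h$, i.e.\ $m\le\sqrt{rN}$ (Lemma~\ref{l:dCh}). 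When the optimal degree $m_0\asymp\log(M/\varepsilon)$ exceeds this ceiling one settles for $m=[\sqrt{rN}]$, and the $q^mM$ term becomes $\delta^{\sqrt N}M$. So the $\sqrt N$ is a node-separation constraint on the lattice, not a Markov-type derivative amplification, and the mechanism you propose cannot reproduce it.
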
 

The text is organized in the following way. In the next section we give a new proof of the three sphere theorem for continuous harmonic functions that can be adjusted to the discrete case. The adjustment is done in Section 3, where we prove the main result. For the case $R<2r<2^{-2n-3}$ we suggest relatively simple formulas for $\alpha$ and $\delta$, for the general case one has to iterate the estimate in a standard way. In the last Section we prove that any discrete harmonic function on a cube can be extended to the whole lattice by  a discrete harmonic polynomial.

\section{Continuous case}
The proposition below is the three balls theorem for harmonic functions. It is well known, the standard approach is to prove logarithmic convexity for $L^2$-norms and then use elliptic estimates to obtain $L^\infty$ estimates, see \cite{KM} for the details. We give another elementary proof that will be extended to discrete situation in the next section. We work in $n$-dimensional Euclidean space $\R^n$ and fix $n$, so our constants may depend on the dimension.
\begin{proposition}\label{pr:cont}
Let  $0<r<R<1/4$. There exist constants $C>0$ and $\alpha\in(0,1)$ such that for any harmonic function $u$ in the unit ball with 
\[\max_{|x|\le r}|u(x)|=\varepsilon,\quad \max_{|x|\le 1}|u(x)|=M,\]
the following inequality holds
\begin{equation}\label{eq:cth}
\max_{|x|=R}|u(x)|\le C\varepsilon^\alpha M^{1-\alpha}.
\end{equation}
\end{proposition}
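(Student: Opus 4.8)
The plan is to prove the three balls theorem via an explicit representation of a harmonic function on the unit ball by its boundary values through the Poisson kernel, exploiting the fact that the Poisson kernel is a \emph{real-analytic} function of the spatial variable. The key idea is that the analyticity of the Poisson kernel allows us to compare the values of $u$ on different spheres in a quantitative way, and the crucial feature is that this comparison can be made without invoking the rotational symmetry of the Laplacian through eigenfunction expansions on the sphere.

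\medskip

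\noindent Concretely, the first step is to write, for $|x|<1$,
\[
u(x)=\int_{|y|=1} P(x,y)\,u(y)\,d\sigma(y),
\]
where $P(x,y)$ is the Poisson kernel for the unit ball. The second step is to use the analyticity of $P(x,\cdot)$ (equivalently of $u$ itself) to relate $u$ on the sphere $|x|=R$ to its Taylor data at the origin, and to relate that same data to the size of $u$ on the small ball $|x|\le r$. A clean way to organize this is to consider the function $f(t)=u(tx_0)$ for a fixed direction $x_0$ with $|x_0|=1$; since $u$ is harmonic in the unit ball, $f$ extends to a holomorphic function of $t$ in a complex neighborhood of $[0,1)$, and one obtains quantitative bounds on its Taylor coefficients at $t=0$ from the bound $M$ via Cauchy estimates on a disc of radius close to $1$. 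I would then combine the lower-order control coming from $\max_{|x|\le r}|u|=\varepsilon$ with the Cauchy-type bounds on the higher-order terms to interpolate the value at $t=R$.

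\medskip

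\noindent The mechanism that produces the exponent $\alpha$ is a standard Hadamard three-lines (or Hadamard three-circles) argument applied to the holomorphic extension: one controls $u$ in a small complex disc by $\varepsilon$, in a large complex disc by $M$, and then the logarithmic convexity of the maximum modulus on intermediate circles yields exactly an estimate of the form $\varepsilon^\alpha M^{1-\alpha}$, with $\alpha$ depending only on the ratio of the radii $r,R$ (and the radius of analyticity, which is dictated by the distance from $R$ to the boundary of the unit ball). The restriction $R<1/4$ guarantees that the complexified function $t\mapsto u(tx_0)$ is holomorphic in a disc large enough to carry the argument, giving room between $r$, $R$, and the singularities of the Poisson kernel at $|x|=1$.

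\medskip

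\noindent The main obstacle I anticipate is obtaining uniform control over all directions $x_0$ simultaneously, so that the pointwise estimate on each ray assembles into the claimed estimate for $\max_{|x|=R}|u|$. The radius of analyticity of $t\mapsto u(tx_0)$ and the Cauchy bounds on its coefficients must be uniform in $x_0$; this uniformity is exactly what the analyticity of the Poisson kernel provides, but making it explicit (rather than qualitative) requires care in tracking how the constants depend on the distance to the boundary sphere. The reason I favor this route over the classical spherical-harmonic expansion is that it replaces the rotational symmetry of the Laplacian with the analyticity of the kernel, and this latter property is robust enough to survive discretization, which is precisely the adaptation carried out in the following section.
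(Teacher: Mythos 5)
Your outline contains a genuine gap at its central step. The whole mechanism rests on the assertion that "one controls $u$ in a small complex disc by $\varepsilon$," but the hypothesis $\max_{|x|\le r}|u|=\varepsilon$ controls $u$ only at \emph{real} points. The holomorphic extension $f(z)=u(zx_0)$ at a non-real $z$ with $|z|\le r$ is not a value of $u$ on the small ball, and nothing in your argument bounds it by $\varepsilon$; the Cauchy estimates you invoke bound $f$ on complex discs only in terms of $M$. Since Hadamard's three-circles theorem requires maximum-modulus bounds on full circles in $\C$, the convexity argument cannot even start. This real-versus-complex smallness issue is precisely what any proof along these lines must address. The gap is fixable: interior derivative estimates for harmonic functions give $|D^\beta u(0)|\le C_n^{|\beta|}\,|\beta|!\,r^{-|\beta|}\varepsilon$, hence $|f(z)|\le 2\varepsilon$ on a complex disc $|z|\le r/C_n'$, and then three circles between the radii $r/C_n'$, $R$ and $1/2$ does produce $C\varepsilon^\alpha M^{1-\alpha}$ (with a dimension-dependent loss in $\alpha$); alternatively one can apply the two-constants theorem with the harmonic measure of the real segment $[-r,r]$ in the disc. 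But some such bridge must be supplied, and your proposal never identifies the need for it.

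It is also worth contrasting your route with the paper's, because the difference is not cosmetic. The paper never complexifies $u$: it approximates the Poisson kernel $P(x_0,\cdot)$ by a linear combination $\sum_k c_k P(x_k,\cdot)$ with \emph{real} nodes $x_k$, $|x_k|\le r$, chosen as Chebyshev nodes for Lagrange interpolation; the coefficients are bounded by $m^{-1}(2R/r)^m$ and the interpolation error is bounded via the residue theorem applied to the analytic extension of the \emph{kernel}, not of $u$. Smallness of $u$ then enters only through its values at the real points $x_k$, which is exactly what sidesteps the gap above. This also undercuts your closing claim: complexifying $t\mapsto u(tx_0)$ does not survive discretization, because a lattice function has no canonical holomorphic extension along a ray. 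What does discretize is the explicit analytic extension of the (discrete) Poisson kernel together with interpolation at real lattice nodes, which is why the paper's form of the argument, and not the three-circles form, carries over to Section 3.
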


\begin{proof}
We have 
\[
u(x)=\int_{S^{n-1}}P(x,y)u(y)d\sigma(y),\]
where $P(x,y)=\gamma_n(1-|x|^2)(|x-y|)^{-n}$ is the standard Poisson kernel for the unit ball.
We fix a point $x_0$ such that $|x_0|\le R$. The idea of the proof is to approximate $P(x_0,y)$ by a linear combination of the form $\sum_{k=1}^mc_kP(x_k,y)$ with $|x_k|\le r$. We will need two estimates, one for the error $r_m(x_0,y)$ of the approximation and another for the sum of the absolute values of the coefficients of the approximating linear combination.

We choose points $x_k$ on the segment $[0,rR^{-1}x_0]$,  $x_k=t_krR^{-1}x_0$, $t_k\in(0,1)$, and consider the standard Lagrange interpolation of the function $f(t)=P(trR^{-1}x_0,y)$, then
\[
c_k=\prod_{j\neq k}\frac{r^{-1}R-t_j}{t_k-t_j}.\]
Consider the polynomial $H_m(t)=(t-t_1)...(t-t_m)$, we have \[|c_k|\le (R/r)^m|H_m'(t_k)|^{-1}.\]
Now we choose $t_1,...,t_m$ to be the Chebyshev nodes,  $t_k=\cos\left(\pi\frac{2k-1}{2m}\right)$, then $H_m(t)=2^{1-m}T_m(t)$, where $T_m$ is the Chebyshev polynomial of the first kind. We have $H'_m(t)=m2^{1-m}U_{m-1}(t)$, where $U_{m-1}$ is the Chebyshev polynomial of the second kind, see for example \cite[Chapter 2]{DB}. Therefore
\[
U_{m-1}(t_k)=U_{m-1}\left(\cos\left(\pi\frac{2k-1}{2m}\right)\right)=\frac{\sin \left(\pi\frac{2k-1}{2}\right)}{\sin \left(\pi\frac{2k-1}{2m}\right)}=\frac{(-1)^{k-1}}{\sin \left(\pi\frac{2k-1}{2m}\right)}.\]
Then $|H_m'(t)|\ge m2^{1-m}$ and $|c_k|\le m^{-1}(2R/r)^m$.

In order to estimate the error of the approximation, we use an analytic extension of the function $f(t)=P(trR^{-1}x_0,y)$  to the disk of radius $1/2r$ centered at the origin on the complex plane, see for example \cite[Chapter 4]{DB} for the residue method in the interpolation error estimate. We have
\[f(z)=\gamma_n\frac{1-r^2R^{-2}|x_0|^2z^2}{\left(\sum_j (rR^{-1}x_{0,j}z-y_j)^2\right)^{n/2}},\]
 this extension is bounded by a constant $A_n$. We consider the  function 
\[\Omega(z)=\frac{ f(z)H_m(R/r)}{(z-R/r)H_m(z)}\]
which is meromorphic in $\{|z|<1/2r\}$ and has simple poles at the points $R/r$ and $t_1,...,t_m$. Then by the residue theorem, we get (see also \cite[Theorem 4.3.3]{DB}),
\begin{multline*}
|r_m(x_0,y)|=|P(x_0,y)-\sum_{k=1}^mc_kP(x_k,y)|=|f(R/r)-\sum_{k=1}^m c_kf(t_k)|=\\\left|\frac{1}{2\pi i}\int_{|z|=1/2r}\frac{f(z)H_m(R/r)}{(z-R/r)H_m(z)}dz\right|\le \frac{A_n}{1-2R}\left(\frac{2R}{1-2r}\right)^m.
\end{multline*} 
Thus we have the following two estimates
\[
\sum_{k=1}^m |c_k|\le \left(\frac{2R}{r}\right)^m= B^m\quad \text{and}\quad |r_m(x_0,y)|\le aq^m,\]
for some $B,a,q $ such that $q<1$.
Then
\begin{multline}\label{eq:cont}
|u(x_0)|=\left|\int_{S^{n-1}}P(x_0,y)u(y)d\sigma(y)\right|\le\\
\sum_{k=1}^m|c_k|\left|\int_{S^{n-1}}P(x_k,y)u(y)d\sigma(y)\right|+\left|\int_{S^{n-1}}r_m(x_0,y)u(y)d\sigma(y)\right|\le\\
\sum_{k=1}^m|c_k||u(x_k)|+c_n\max_{|y|=1}|r_m(x_0,y)||u(y)|\le B^m\varepsilon+a_1q^m M.
\end{multline}
To minimize the sum we choose $m=\left[(\log M-\log\varepsilon)\left(\log B-\log q\right)^{-1}\right]+1$, where $[t]$ is the largest integer less than or equal to $t$, and obtain the required inequality (\ref{eq:cth}) with $C=B+a_1=2Rr^{-1}+A_nc_n(1-R)^{-1}$ and $\alpha=1-\log B(\log B-\log q)^{-1}$.
\end{proof}
The Chebyshev nodes is the standard choice in the interpolation problems, when we want to have a good control over the coefficients. More generally the Fekete points of a given compact  set $K\subset\R$ can be chosen. They appear also in the quantitative propagation of smallness from the sets of positive capacity, see \cite{M}.

\section{Discrete case}
We start by the following discrete version of the Poisson integral representation
\begin{equation}\label{eq:Pk}
u(x)=\sum_{y\in \partial Q_1^h} u(y)P_h(x,y),\end{equation}
where for each $y\in\partial Q_1^h$, the function $P_h(x,y)$ is $h$-discrete  harmonic in the variable $x$ in $Q_1^h$, and satisfies the boundary conditions $P_h(y,y)=1$ and $P_h(z,y)=0$ for any $z\in\partial Q^h_1\setminus\{y\}$. 

We will write down an analytic expression for $P_h(x,y)$. Note that since we consider discrete function with finitely many values, its analytic extension is not unique. Without loss of generality, we assume that $y=(y_1,...,y_{n-1},1)$. For each $K=(k_1,...,k_{n-1})\in ((0,2N)\cap \Z)^{n-1}=\llbracket1,2N-1\rrbracket^{n-1}$ we define $a^h_K$ to be the only positive solution of the equation
\[
\cosh \frac{h a^h_K}{2}=n-\sum_{j=1}^{n-1}\cos \frac{\pi k_jh}{2}.\]
Then 
\[
f^h_K(x)=\sinh( a^h_K (x_n+1)/2)\prod_{j=1}^{n-1}\sin (\pi k_j (x_j+1)/2)\]
is $h$-discrete harmonic and vanishes on all sides of the cube except the one where $y$ lies. It is easy to check that
\[
P_h(x,y)=\left(\frac{1}{N}\right)^{n-1}\sum_{K} \prod_{j=1}^{n-1}\sin \left(\pi k_j \frac{x_j+1}{2}\right)\sin \left(\pi k_j \frac{y_j+1}{2}\right)\frac{\sinh \left(a_K^h \frac{x_n+1}{2}\right)}{\sinh a_K^h},\]
where summation is taken over $K\in\llbracket 1,2N-1\rrbracket^{n-1}$. This function is $h$-discrete harmonic in $Q_1^h$ and satisfies the required boundary conditions.

\begin{proposition}\label{pr:ext}
For any $y\in\partial Q_1^h$ and $(x_1,..,x_{j-1}, x_{j+1},...,x_n)\in[-1/2,1/2]^{n-1}$, $j=1,...,n$,  the function $f(t)=P_h(x_1,..., x_{j-1},t,x_{j+1},...,x_n,y)$ has a holomorphic extension to the domain $\Omega=\{z: -1/2\le \Re z\le 1/2, -1/16\le \Im z\le 1/16\}\subset \C$ that satisfies $|f(z)|\le CN^{1-n}$ for any $z\in\Omega$.
\end{proposition}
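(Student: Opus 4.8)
The plan is to work directly from the explicit formula for $P_h(x,y)$. Holomorphy is the easy part: fixing all coordinates except $x_j$ and replacing $x_j$ by a complex variable $z$, each summand becomes either $\sin(\pi k_j(z+1)/2)$ (if $j\le n-1$) or $\sinh(a^h_K(z+1)/2)$ (if $j=n$) times quantities independent of $z$; both are entire, and the sum over $K\in\llbracket 1,2N-1\rrbracket^{n-1}$ is finite, so $f$ extends to an entire function. Hence the whole content of the statement is the uniform bound, which, after pulling out the prefactor $N^{1-n}$, reduces to showing that $\sum_K|T_K(z)|\le C$ for $z\in\Omega$, where $T_K(z)$ denotes the $K$-th summand and $C$ depends only on $n$. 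By the coordinate symmetry of the construction I may assume $y=(y_1,\dots,y_{n-1},1)$, and I treat the tangential case $j\le n-1$ and the normal case $j=n$ separately.

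The key is a lower bound on $a^h_K$ in terms of the $k_j$. Rewriting the defining equation as $\cosh(ha^h_K/2)=1+2\sum_j\sin^2(\pi k_jh/4)$ and using $\cosh u-1=2\sinh^2(u/2)$ gives the clean identity
\[
\sinh\!\Big(\frac{ha^h_K}{4}\Big)=\Big(\sum_{j=1}^{n-1}\sin^2\tfrac{\pi k_jh}{4}\Big)^{1/2}.
\]
Dropping all but one term yields $\sinh(ha^h_K/4)\ge\sin(\pi k_mh/4)$ for each $m$, hence $a^h_K\ge \frac{4}{h}\,\mathrm{arcsinh}(\sin(\pi k_mh/4))$. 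I then invoke an elementary one–variable estimate: the function $g(s)=\tfrac{4}{s}\,\mathrm{arcsinh}(\sin(\pi s/4))$ satisfies $g(s)\ge c_0$ on $(0,2)$ for an explicit $c_0>\pi/4$ (numerically $g$ decreases from $\pi$ to $2\log(1+\sqrt2)\approx1.76$). Since $s=k_mh=k_m/N\in(0,2)$, this gives $a^h_K\ge c_0 k_m$ for every $m$, and averaging over $m$ gives in addition $a^h_K\ge \frac{c_0}{n-1}\sum_j k_j$.

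With these bounds the estimates are routine. In the normal case $j=n$ the $n-1$ sine factors are real and bounded by $1$, while the complexified ratio obeys $|\sinh(a^h_K(z+1)/2)|^2=\sinh^2(a^h_K(\Re z+1)/2)+\sin^2(a^h_K\Im z/2)\le \sinh^2(3a^h_K/4)+1$, since $\sin^2\le1$ harmlessly absorbs the imaginary part; dividing by $\sinh a^h_K$ and using $a^h_K\ge c_0>1$ gives $|T_K|\le Ce^{-a^h_K/4}$. The combined bound $a^h_K\ge\frac{c_0}{n-1}\sum_j k_j$ then makes $\sum_K e^{-a^h_K/4}$ a product of convergent geometric series, bounded by a constant depending only on $n$. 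In the tangential case $j\le n-1$ the only new feature is the factor $|\sin(\pi k_j(z+1)/2)|\le\cosh(\pi k_j\Im z/2)\le e^{\pi k_j/32}$ coming from $|\Im z|\le 1/16$; since the remaining $\sinh$–ratio is at most $Ce^{-a^h_K/4}$, and I have the margin $c_0/8>\pi/32$ (equivalently $c_0>\pi/4$), splitting $a^h_K\ge\frac{c_0}{2}k_j+\frac{c_0}{2(n-1)}\sum_i k_i$ produces a summand bounded by $e^{-(c_0/8-\pi/32)k_j}\prod_i e^{-\frac{c_0}{8(n-1)}k_i}$, which again sums to a constant.

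The main obstacle is the quantitative lower bound $a^h_K\ge c_0k_j$ with a constant large enough to beat the growth produced by complexifying a tangential variable; concretely one needs $\min_{(0,2)}g>\pi/4$. This is exactly where the geometry of the thin strip $\Omega$ enters: the admissible imaginary width $1/16$ fixes the growth rate of $\cosh(\pi k_j\Im z/2)$ at $\pi/32$ per unit $k_j$, comfortably below the kernel's decay rate $c_0/4\approx0.44$, leaving enough room to also reserve part of the decay for summing over the remaining $k_i$; a substantially wider strip would reverse this inequality and make the series diverge. The remaining steps are elementary, and I would establish the bound on $g$ by checking its monotonicity on $(0,2)$.
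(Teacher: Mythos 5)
Your proof is correct and takes essentially the same approach as the paper: both bound the explicit series for $P_h$ term by term, using a lower bound for $a_K^h$ that is linear in the $k_j$'s and exploiting that the growth rate $\pi/32$ permitted by the strip is strictly beaten by the decay rate of the $\sinh$-ratio, so the sum over $K$ is dominated by convergent geometric series. The only notable difference is that your exact identity $\sinh(ha_K^h/4)=\bigl(\sum_j\sin^2(\pi k_jh/4)\bigr)^{1/2}$ yields the uniform bound $a_K^h\ge c_0k_m$ with $c_0=2\log(1+\sqrt2)$, whereas the paper derives the dichotomy $a_K^h\ge 2N$ or $a_K^h\ge\|K\|$ from Taylor-type inequalities and must accordingly split each sum into a convergent main part and an $O(N^n e^{-cN})$ remainder.
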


\begin{proof} The holomorphic extension is given by the formula above. We need to prove the estimate.
First, we note that either $ha_K^h\ge 2$ or
\[
\left(\frac{ha_K^h}{2}\right)^2\ge \cosh \frac{ha_K^h}{2}-1=\sum_{j=1}^{n-1}(1-\cos \frac{\pi k_j h}{2})\ge \frac14\sum_{j=1}^{n-1}  k_j^2h^2.\]
Thus either $a_K^h\ge 2N$ or $a_K^h\ge \|K\|$, where  $\|K\|^2=\sum_j k_j^2\ge n^{-1}\left(\sum_jk_j\right)^2$. 

We consider two cases $j=n$ and $j\neq n$. First, if $j=n$ then, since  $|\Re z|\le 1/2$, we have
\begin{multline*}
|f(z)|\le C N^{1-n}\sum_{K} \exp(-a_K^h/4)\le\\
CN^{1-n}\sum_K\exp(-\|K\|/4)+CN^{1-n}(2N)^n\exp(-N/2)\le\\
CN^{1-n}\left(\sum_{k=1}^\infty\exp\left(-\frac{k}{4\sqrt{n}}\right)\right)^{n-1}+CN^{1-n}(2N)^n\exp(-N/2)\le C_nN^{1-n}.
\end{multline*}
Otherwise, if $j\neq n$, and taking into account that $|\Im z|\le 1/16$, we obtain
\begin{multline*}
|f(z)|\le CN^{1-n}\sum_K \exp(\pi k_j/32-a_K^h/4)\le\\
 CN^{1-n}\sum_K \exp(-a_K^h/32)+CN^{1-n}N^n\exp(-N/4+\pi N/16)\le C_nN^{1-n}.
\end{multline*}

\end{proof}

We also need a discrete version of Chebyshev's nodes.

\begin{lemma} \label{l:dCh}
Suppose that $M>m^2$. Then there exists a polynomial $H_{m,M}(t)=(t-s_1)...(t-s_m)$, where $s_j\in M^{-1}\Z\cap[-1,1]$, such that $|H_{m,M}'(s_j)|\ge m2^{1-m}$ for any $j=1,...,m$.
\end{lemma}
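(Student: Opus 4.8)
We need a discrete version of Chebyshev nodes. The continuous version uses Chebyshev nodes $t_k = \cos(\pi(2k-1)/2m)$ on $[-1,1]$, which give $H_m(t) = 2^{1-m}T_m(t)$ and $|H'_m(t_k)| = m 2^{1-m}$. The key property used in the continuous proof is exactly this lower bound on the derivative at the nodes, which bounds the interpolation coefficients $|c_k|$.

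Now the discrete case requires nodes $s_j \in M^{-1}\mathbb{Z} \cap [-1,1]$ (lattice points!) satisfying $|H'_{m,M}(s_j)| \geq m 2^{1-m}$. So we can't use the exact Chebyshev nodes; we need nodes on a fine grid of spacing $M^{-1}$, and the condition $M > m^2$ suggests we round the Chebyshev nodes to the grid and control the perturbation.

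**My Approach**

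The natural idea: take $s_j$ to be the nearest lattice point in $M^{-1}\mathbb{Z}$ to the true Chebyshev node $t_j$, so $|s_j - t_j| \leq \frac{1}{2M}$. Then compare $H_{m,M}(t) = \prod_j(t-s_j)$ with $H_m(t) = \prod_j(t-t_j)$. Let me sketch how I'd estimate $H'_{m,M}(s_k)$.

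We have $H'_{m,M}(s_k) = \prod_{j\neq k}(s_k - s_j)$. Compare this to $H'_m(t_k) = \prod_{j\neq k}(t_k - t_j)$ which satisfies $|H'_m(t_k)| = m2^{1-m}$. Write $s_k - s_j = (t_k - t_j) + (s_k - t_k) - (s_j - t_j)$, and the perturbation $(s_k-t_k)-(s_j-t_j)$ has magnitude at most $1/M$. The ratio of the products is $\prod_{j\neq k}\left(1 + \frac{(s_k-t_k)-(s_j-t_j)}{t_k-t_j}\right)$. The obstruction is that the Chebyshev nodes can be very close together (spacing $\sim 1/m^2$ near the endpoints), so $|t_k - t_j|$ in the denominator can be small. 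This is exactly why the condition $M > m^2$ appears: it guarantees the perturbation $1/M$ is small relative to the node spacing $\sim 1/m^2$.

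Here is my plan. First I would establish that consecutive Chebyshev node gaps satisfy $|t_k - t_j| \geq c|k-j|/m^2$ (using the concavity/spacing of $\cos$), so the denominators are controlled from below. Then, since $M > m^2$, each perturbation factor is bounded, and the product of all the factors stays within a fixed constant of $1$. However, getting the clean bound $m 2^{1-m}$ rather than just $c \cdot m 2^{1-m}$ is delicate — a naive perturbation bound only gives a comparable lower bound. I suspect the intended argument is cleaner: perhaps round each $t_k$ to a grid point so that the \emph{differences} $s_k - s_j$ are no smaller than $t_k - t_j$, or choose nodes so that the polynomial's derivative is not decreased. I would therefore try to arrange the rounding adaptively so that no gap shrinks below the original Chebyshev gap, thereby preserving the lower bound exactly.

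**The Main Obstacle**

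The hardest part will be achieving the exact constant $m2^{1-m}$, not merely a bound of the form $c_m \cdot m 2^{1-m}$. A direct multiplicative perturbation argument loses a dimension-independent but $m$-dependent factor from the product over $j$, which could spoil the exponential balance in the main theorem. The resolution must exploit the specific structure: either the Chebyshev polynomial's extremal behavior lets us move nodes monotonically (rounding each node \emph{away} from its neighbors to a grid point so every gap $|s_k - s_j| \geq |t_k - t_j|$, which forces $|H'_{m,M}(s_k)| \geq |H'_m(t_k)| = m2^{1-m}$), or we reparametrize so that the lattice constraint lands favorably on the nodes. Verifying that such a simultaneous monotone rounding is consistent for all nodes at once — that every node can be pushed to a grid point of spacing $1/M \le 1/m^2$ without any pair of gaps decreasing — is the crux, and the condition $M > m^2$ is what makes the required room available.
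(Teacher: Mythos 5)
Your setup is the right one (round the Chebyshev nodes to the grid $M^{-1}\Z$ and compare $\prod_{k\neq j}|s_j-s_k|$ with $\prod_{k\neq j}|t_j-t_k|$), and you correctly isolate the difficulty: naive nearest-point rounding only gives $|s_j-s_k|\ge |t_j-t_k|-M^{-1}$, which degrades the constant. But the resolution you propose is exactly the step you leave unproved, and as stated it fails. Preserving \emph{every} pairwise gap, $|s_j-s_k|\ge|t_j-t_k|$, is equivalent (with the ordering $t_1>t_2>\dots>t_m$ kept) to the displacements $d_j=s_j-t_j$ forming a non-increasing sequence. Each $d_j$ must lie on the shifted grid $M^{-1}\Z-t_j$, so passing from $d_j$ to an admissible $d_{j+1}\le d_j$ can cost up to $M^{-1}$, and the total drift $d_1-d_m$ is in general of order $m/M$, i.e.\ of order $1/m$ when $M$ is only slightly larger than $m^2$. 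Meanwhile the extreme Chebyshev nodes satisfy $1-t_1=1-\cos(\pi/2m)=O(m^{-2})$ and $1+t_m=O(m^{-2})$, so there is only $O(m^{-2})$ of room before the rounded nodes leave $[-1,1]$. Hence the ``simultaneous monotone rounding'' you hope for is in general impossible under the hypothesis $M>m^2$ (it would need roughly $M\gtrsim m^3$); the condition $M>m^2$ does \emph{not} provide the room your argument needs.

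The paper's proof uses a different and simpler mechanism, which is the ingredient missing from your proposal: it \emph{dilates} the Chebyshev configuration by a factor of $2$ before rounding. One rounds the points $2t_j-1$, not the $t_j$ themselves, to the grid: $|s_j-(2t_j-1)|\le (2M)^{-1}$. Then each new gap satisfies $|s_j-s_k|\ge 2|t_j-t_k|-M^{-1}\ge |t_j-t_k|$, since $M^{-1}<m^{-2}\le |t_j-t_k|$: the doubling absorbs the rounding error with room to spare, and no consistency between the roundings of different nodes is needed, as each node is rounded independently. Consequently $|H_{m,M}'(s_j)|=\prod_{k\neq j}|s_j-s_k|\ge\prod_{k\neq j}|t_j-t_k|=|H_m'(t_j)|\ge m2^{1-m}$, so the exact Chebyshev constant survives. (A side remark: as written, the paper itself has a normalization slip, since the points $2t_j-1$ lie in $[-3,1]$, so the constructed $s_j$ need not lie in $[-1,1]$ as the statement demands; one must renormalize, e.g.\ start from Chebyshev nodes of an interval of half the length at the cost of a factor $2^{m-1}$, or restate the lemma on a larger interval. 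So your instinct that the exact constant on $[-1,1]$ is delicate is sound, but the dilation trick, not an adaptive gap-preserving rounding, is what makes the argument work.)
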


\begin{proof}
Let $t_k=\cos ((2k-1)\pi/2m)$ be the classical Chebyshev nodes, an elementary estimate shows that $|t_j-t_k|\ge m^{-2}$ when $j\neq k$. We choose $s_j\in M^{-1}\Z$ such that  $|s_j-2t_j+1|\le  (2M)^{-1}$. Then 
\[
|s_j-s_k|\ge 2|t_j-t_k|-M^{-1}\ge |t_j-t_k|.\]
We have
\[
|H_{m,M}'(s_j)|=\prod_{k\neq j}|s_j-s_k|\ge \prod_{k\neq j}|t_j-t_k|\ge m2^{1-m}.\]
\end{proof}

Combining the statements above and repeating the argument from the previous section, we obtain the following
\begin{lemma}\label{l:three}
Suppose that $r<R<2r<2^{-2n-3}$. There exist constants $A,B,q$ that depend on $r,R$ with $q<1$ such that for any
$h$-discrete harmonic function $u$ in $[-1,1]^n$ and any $m<\sqrt{rh^{-1}}$ we have
\[
\max_{Q_R^h}|u(x)|\le A(B^m\max_{Q_r^h}|u(x)|+q^m\max_{Q_1^h}|u(x)|).\]
\end{lemma}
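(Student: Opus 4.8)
The plan is to mimic the continuous proof (Proposition \ref{pr:cont}) line by line, using the discrete Poisson representation (\ref{eq:Pk}) in place of the classical Poisson integral, and replacing the two analytic inputs — the bounded holomorphic extension of the Poisson kernel and the Chebyshev interpolation — by their discrete counterparts just established (Proposition \ref{pr:ext} and Lemma \ref{l:dCh}). First I would fix a target point $x_0\in Q_R^h$ and, for each boundary point $y\in\partial Q_1^h$, approximate $P_h(x_0,y)$ by a linear combination $\sum_{k=1}^m c_k P_h(x_k,y)$ with $x_k\in Q_r^h$. As in the continuous case the points $x_k$ are chosen along the ray through $x_0$, scaled down by the factor $r/R$, so that they land inside $Q_r$; the essential new point is that the interpolation nodes must now sit on the lattice, which is exactly what Lemma \ref{l:dCh} provides, with $M=N/(\text{const})$ playing the role of the mesh parameter and the constraint $m<\sqrt{rh^{-1}}$ guaranteeing $M>m^2$.

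Next I would carry out the two estimates that drive the argument. For the coefficient bound, since Lemma \ref{l:dCh} gives $|H_{m,M}'(s_j)|\ge m2^{1-m}$, the same computation as in the continuous proof yields $\sum_k|c_k|\le (2R/r)^m=B^m$. For the error bound I would apply the residue/contour method to the holomorphic extension supplied by Proposition \ref{pr:ext}: the function $f(t)=P_h(x_0+t\cdot(\text{direction}),y)$ extends holomorphically to the strip $\Omega$ with $|f(z)|\le CN^{1-n}$ there, so integrating $\Omega(z)=f(z)H_{m,M}(R/r)\big/\big((z-R/r)H_{m,M}(z)\big)$ over the boundary of $\Omega$ and estimating gives an interpolation error of the form $|r_m(x_0,y)|\le a q^m N^{1-n}$ with $q<1$ governed by the ratio $2R/(1-2r)<1$ forced by the hypothesis $R<2r<2^{-2n-3}$. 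Summing the representation (\ref{eq:Pk}) over the $O(N^{n-1})$ boundary points then cancels the $N^{1-n}$ factor against the number of terms, leaving
\[
\max_{Q_R^h}|u|\le A\big(B^m\max_{Q_r^h}|u|+q^m\max_{Q_1^h}|u|\big),
\]
which is the claimed estimate.

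The main obstacle, and the reason the constant $N^{1-n}$ appears in Proposition \ref{pr:ext}, is bookkeeping the dependence on the mesh so that summing over the boundary lattice points does not destroy the estimate: the error bound per boundary point carries a factor $N^{1-n}$ precisely so that multiplying by $\operatorname{card}(\partial Q_1^h)\sim N^{n-1}$ produces an $N$-independent constant. The second delicate point is that the holomorphic extension of $P_h$ is available only on the thin strip $\Omega$ (width $1/16$ in the imaginary direction), rather than on a full disk as in the continuous case, so the contour integral must be taken over $\partial\Omega$ and the geometric decay rate $q$ must be re-derived from the geometry of that strip; keeping $R$ and $r$ small (the hypothesis $R<2r<2^{-2n-3}$) is exactly what keeps all the relevant points and the pole at $R/r$ safely inside $\Omega$ and keeps $q<1$. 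The remaining steps are routine adaptations of the continuous argument.
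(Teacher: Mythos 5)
Your plan follows the continuous proof too literally at the one point where it cannot be followed: the choice of interpolation nodes along the ray through $x_0$. In the discrete setting this step fails for two independent reasons. First, the rescaled points $x_k=s_k rR^{-1}x_0$ are in general not lattice points: if $x_0=h(a_1,\dots,a_n)$ with $\gcd(a_1,\dots,a_n)=1$, then $tx_0\in(h\Z)^n$ forces $t\in\Z$, so the segment $[0,rR^{-1}x_0]$ contains no lattice points other than the origin; consequently the values $u(x_k)$ are not even defined, let alone controlled by $\max_{Q_r^h}|u|$. Lemma \ref{l:dCh} only places nodes in $M^{-1}\Z\cap[-1,1]$, i.e.\ it discretizes \emph{one} coordinate; it cannot produce lattice nodes on an oblique ray. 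Second, Proposition \ref{pr:ext} provides a bounded holomorphic extension of $P_h(\cdot,y)$ only in a \emph{single} coordinate at a time, with the remaining $n-1$ coordinates held real in $[-1/2,1/2]^{n-1}$; it says nothing about the function $t\mapsto P_h(trR^{-1}x_0,y)$ in which all coordinates are complexified simultaneously. Nor can one expect the same bound for such a joint extension: each factor $\sin(\pi k_j(x_j+1)/2)$ in the explicit formula grows like $\exp(ck_j|\Im x_j|)$, and with all coordinates complexified the combined growth $\exp\bigl(c|\Im z|\sum_j k_j\bigr)$ is no longer dominated by the decay $e^{-a_K^h/4}$ once the dimension is large. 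So the contour-integral error estimate you invoke has no justification along your ray.

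The paper circumvents exactly this difficulty with an idea absent from your proposal: a chain of rectangles $R_0=[-r,r]^n$, $R_1=[-R,R]\times[-r,r]^{n-1}$, \dots, $R_n=[-R,R]^n$, propagating the estimate from $R_{j-1}$ to $R_j$ by interpolating only in the $j$-th coordinate, where the nodes $s_kr$ (with $M=Nr$, $s_k\in M^{-1}\Z$) do lie on the lattice and Proposition \ref{pr:ext} applies verbatim. Iterating $n$ times produces the constants $B=B_1^n$, $q=q_1B_1^{n-1}$, $A=A_1^nB_1/(B_1-1)$ with $q_1=16(R+r)$ and $B_1=2+2R/r$, and it is precisely this iteration that makes the dimension-dependent smallness hypothesis $R<2r<2^{-2n-3}$ necessary to keep $q<1$ --- a dependence your outline, which attributes $q<1$ to the continuous-case ratio $2R/(1-2r)$, does not account for. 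Your other observations (the factor $N^{1-n}$ canceling against the $\sim N^{n-1}$ boundary points, the constraint $m<\sqrt{rh^{-1}}$ ensuring $M>m^2$) are correct and match the paper's design, but without the coordinate-by-coordinate chain the proof does not go through.
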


\begin{proof} We may assume that $r,R\in h\Z$. 
We consider the following chain of rectangles $R_0=[-r,r]^n, R_1=[-R,R]\times[-r,r]^{n-1}$,..., $R_n=[-R,R]^n$. We want to prolongate the estimate from $R_j$ to $R_{j+1}$. Let $x=(x_1,...,x_{j},...,x_n)\in \partial R_j\setminus R_{j-1}$, then $r<|x_j|\le R$. For each $y\in \partial Q^h_1$ we consider the Poisson kernel $P_h(x,y)$ as a function of $x_j$. More precisely, we fix $y\in\partial Q^h_1$ and define
\[f(t)=P_h(x_1,...,tx_j|x_j|^{-1}r,..., x_n,y).\]
Further, by Proposition \ref{pr:ext}, $f$ can be extended to a holomorphic function in the domain $D=\{z\in \C: |\Re z|\le (2r)^{-1}, |\Im z|\le (16r)^{-1}\}$, where it satisfies $|f(z)|\le CN^{1-n}$. 

We let $M=Nr$ and choose $s_1,...,s_m$ as in Lemma \ref{l:dCh}. Applying the Lagrange interpolation with nodes $s_j$ we approximate $f(R/r)$ by $\sum_k c_kf(s_k)$, where 
\[
c_k=\prod_{j\neq k}\frac{r^{-1}R-s_j}{s_k-s_j}.\]
By Lemma \ref{l:dCh} we get $|c_k|\le(r^{-1}R+1)^m2^{m-1}m^{-1}$. then
\[
\sum_k|c_k|\le  \left(\frac{2(r+R)}{r}\right)^m.\]
The error of the approximation is
\begin{multline*}
|r_j(x,y)|=|P(x,y)-\sum_k c_kP(x_k,y)|=\\
\left|\frac{1}{2\pi}\int_{\partial D}\frac{f(z)H_{m,M}(R/r)}{(z-R/r)H_{m,M}(z)}dz\right|\le CN^{1-n}(16(R+r))^m.
\end{multline*}
Thus, we have
 \[\max_{R_j}|u(x)|\le A_1(B_1^m\max_{R_{j-1}}|u(x)|+q_1^m\max_{Q^{h}_{1}}|u(x)|),\] 
where $q_1=16(R+r)$ and $B_1=2+2Rr^{-1}$. Iterating this estimate $n-1$ times we obtain the desired estimate with $B=B_1^n$, $q=q_1B_1^{n-1}$ and $A=A_1^nB_1/(B_1-1)$. We have to check that $q<1$, indeed,
\[
q=16(R+r)2^{n-1}(R+r)^{n-1}r^{1-n}=2^{n+3}(R+r)^nr^{1-n}<2^{2n+3}r<1.\]    
\end{proof}

Finally, we prove Theorem \ref{th:m} for the case $R<2r<2^{-2n-3}$. We choose $m_0=\left[(\log M-\log \epsilon)(\log B-\log q)^{-1}\right]+1$. If $m_0<\sqrt{rh^{-1}}$ then applying Lemma \ref{l:three} with $m=m_0$, we obtain
\[\max_{Q_R^h}|u(x)|\le C\varepsilon^\alpha M^{1-\alpha}.\]
If $m_0\ge \sqrt{rh^{-1}}$ then we apply the Lemma with $m=[\sqrt{rh^{-1}}]$ and get
\[
\max_{Q_R^h}|u(x)|\le A_2 q^m M\le C\delta^{\sqrt{N}}M,\]
where $\delta=q^{\sqrt{r}}<1$.  

A standard argument with a chain of squares and iteration of the estimate gives the following.

\begin{corollary}
Let $\Omega$ be a connected domain in $\R^n$, $O$ be an open subset of $\Omega$, and $K\subset\Omega$ be a compact set. Then there exists $C,\alpha$ and $\delta<1$ and $N_0$ large enough such that for any $N\in\Z, N>N_0$, $h=N^{-1}$ and any $h$-harmonic function $u$ on $\Omega^h$ we have
\[
\max_{K^h} |u|\le C\left(\left(\frac{\max_{O^h}|u|}{\max_{\Omega^h}|u|}\right)^{\alpha}+\delta^{\sqrt{N}}\right)\max_{\Omega^h}|u|.\] 
\end{corollary}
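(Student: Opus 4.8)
The plan is to upgrade the local three-balls estimate of Lemma~\ref{l:three} to a global statement over an arbitrary compact set $K$ by a standard chaining argument, tracking how the error term $\delta^{\sqrt N}$ behaves under iteration. The key point is that Lemma~\ref{l:three}, after the rescaling already performed in the main-theorem discussion, gives at each fixed scale an inequality of the form $\max_{B_R}|u|\le C(\max_{B_r}|u|)^{\alpha}(\max_{B_1}|u|)^{1-\alpha}+C\delta^{\sqrt N}\max_{B_1}|u|$, valid on any ball $B_1\subset\Omega$ whose radius exceeds a fixed multiple of $h$ (so that the constraint $m<\sqrt{rh^{-1}}$ can be met with $N>N_0$). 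I would restate this as a purely multiplicative propagation of smallness and then connect $O$ to $K$ through a finite chain of overlapping balls.

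First I would fix a connected open $\Omega'$ with $K\subset\Omega'\Subset\Omega$ and choose $\rho>0$ smaller than the distance from $\overline{\Omega'}$ to $\partial\Omega$, so that every ball of radius $\rho$ centered in $\Omega'$ together with a fixed dilate of it stays inside $\Omega$. By compactness and connectedness I can cover a path from a fixed ball $B^{(0)}\subset O$ to any point of $K$ by a chain $B^{(0)},B^{(1)},\dots,B^{(L)}$ of such balls, consecutive ones overlapping so that the small ball of $B^{(i+1)}$ sits inside $B^{(i)}$; the number of balls $L$ can be bounded uniformly over $K$ by a constant depending only on $\Omega,O,K$. Writing $E_i=\max_{B^{(i)}}|u|$ and $M=\max_{\Omega^h}|u|$, the rescaled Lemma~\ref{l:three} gives $E_{i+1}\le C(E_i^{\alpha}M^{1-\alpha}+\delta^{\sqrt N}M)$ at each link, since each ball and its dilate lie in $\Omega$.

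The core of the argument is the induction on these inequalities. I would show by induction on $i$ that there are constants $C_i,\alpha_i$ (with $\alpha_i=\alpha^i$) and a decaying factor so that
\[
E_i\le C_i\Bigl(\bigl(E_0/M\bigr)^{\alpha_i}+\delta_i^{\sqrt N}\Bigr)M,
\]
where $\delta_i<1$ is obtained from $\delta$ by a finite number of operations (raising to powers $\le 1$ and combining geometrically), so that after $L$ steps one still has a genuine $\delta'<1$ with exponent $\sqrt N$. Concretely, normalizing $v=u/M$ and setting $a_i=E_i/M\le 1$, the recursion $a_{i+1}\le C(a_i^{\alpha}+\delta^{\sqrt N})$ is iterated: the first term produces $a_0^{\alpha^L}=(E_0/M)^{\alpha^L}$ up to a constant, while the accumulated contributions of the $\delta^{\sqrt N}$ terms, each amplified by bounded constants and by the $\alpha$-powers, sum to something bounded by $C\delta'^{\sqrt N}$ for a new $\delta'<1$ (using that a finite geometric-type combination of a quantity $<1$ is again $<1$ for $N$ large). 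Taking $\alpha=\alpha^L$, $\delta=\delta'$, and $C$ the final accumulated constant, and noting $E_0\le\max_{O^h}|u|$ while $E_L\ge\max_{K^h}|u|$ for the terminal balls covering $K$, yields the claimed estimate.

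The main obstacle I expect is bookkeeping of the error term $\delta^{\sqrt N}$ across iterations rather than any conceptual difficulty: one must verify that interpolating a bounded-from-below smallness $a_i$ against $M$ does not let the $\delta^{\sqrt N}M$ contributions proliferate into something that fails to remain $\le\delta'^{\sqrt N}M$ with $\delta'<1$. Because each step multiplies the previous error by a fixed constant and the number of steps $L$ is bounded independently of $N$, the exponent $\sqrt N$ is preserved and only the base $\delta$ and the constant degrade in a controlled way; the key inequality is that for fixed $L$ and large $N$ a sum of $L$ terms each at most $C\delta^{\sqrt N}$ is at most $\delta'^{\sqrt N}$ once $\delta<\delta'<1$. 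A secondary technical point is ensuring the ball-radius lower bound needed in Lemma~\ref{l:three} (the constraint tying $m$ to $\sqrt{rh^{-1}}$) holds uniformly along the chain, which is arranged by taking $N_0$ large relative to $\rho^{-1}$.
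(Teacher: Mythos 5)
Your proposal is correct and follows essentially the same route as the paper, which proves this corollary by exactly the argument you describe: a chain of overlapping squares (balls, in your version) connecting $O$ to $K$ inside a compactly contained subdomain, iteration of the rescaled three-cubes estimate of Lemma~\ref{l:three}/Theorem~\ref{th:m} along the chain, with the exponent degrading to $\alpha^L$ and the error base degrading to a power $\delta^{\alpha^{L-1}}<1$ after the boundedly many steps. Your bookkeeping of the $\delta^{\sqrt N}$ terms under the recursion $a_{i+1}\le C(a_i^{\alpha}+\delta^{\sqrt N})$ and the remark that $N_0$ must be taken large relative to the chain's radius scale are precisely the details the paper leaves implicit.
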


\section{Concluding remarks}

 It is clear that a zero function on a cube can be extended non-trivially to a harmonic function on $\Z^n$. For example from a square $[-M,M]^2$ one may extend the function to a strip $[-M,M]\times\Z$ with arbitrary values at the points $(\pm M, y)$ with $|y|>M$ on the sides of the strip, then lay-wise the function is uniquely extended to a discrete harmonic function on $\Z^2$. The same argument works in higher dimensions.  We may also construct a discrete harmonic  polynomial continuation for any discrete harmonic function on the cube, see proposition below. Therefore a discrete version of three balls (or three cubes) theorem should have an error term that depend on the mesh-size of the lattice. It could be also reformulated in the following way. Given $r<R<1$ there exist $C,\alpha$ and a function $d(N)$ such that $d(N)\rightarrow 0$ as $N\rightarrow \infty$ and any discrete harmonic function $u$ on $[-N,N]\cap\Z^n$ satisfies the inequality
\[
\max_{|x|\le NR}|u|\le  C(\max_{|x|\le rN}|u|^\alpha\max_{|x|\le N}|u|^{1-\alpha}+d(N)\max_{|x|\le 1}|u|).\]
We have proved that one can take $d(N)=\delta^{\sqrt{N}}$.
If we consider a discrete harmonic function with values $\pm 1$ on the boundary of a rectangle $[-N,N]\times[-M,M]$, where $N>M$ and on the sides $\{\pm N\}\times[-N,N]$ and alternating signs, we see that $\max_{[-K,K]^2}\asymp b^{K-M}$ for some $b>1$, when $K>M$ (see \cite{GM}).
Thus the error term $d(N)$ can not go to zero faster than $q^N$ for some $q<1$. It would be interesting to find the correct asymptotic behavior of $d(N)$.

We finish this note by an elementary argument that shows how any discrete harmonic function on a cube can be extended to the lattice by a discrete harmonic polynomial. Note also that there are some discrete sets (like non-convex domains) for which  extension of a harmonic function could be impossible, see \cite{H}.  Let us define
\[K_{M,N}^{(n)}=\{(x_1,...x_n)\in\Z^n:\ |x_1|\le M, ..., |x_{n-1}|\le M, |x_n|\le N\},\]
we also denote $K_{N,N}^{(n)}$ by $K_N^{(n)}$ for simplicity.
We will prove the following
\begin{proposition}
\label{th:mvhi} If $f$ is a discrete harmonic function on $K_N^{(n)}$ then there exists a discrete harmonic polynomial $P$ on $\Z^n$ such that $\deg P\le 6N(n-1)+1$ and $f=P$ on $K_N^{(n)}$.
\end{proposition}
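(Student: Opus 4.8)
The plan is to produce $P$ as the discrete harmonic continuation of Cauchy data read off from a \emph{global} harmonic extension of $f$, and then to use a domain-of-dependence argument to force $P=f$ on the finite set $K_N^{(n)}$. Since $P$ will be discrete harmonic on all of $\Z^n$ and $f$ is discrete harmonic in the interior of the box, matching them amounts to finitely many conditions, which by the discrete maximum principle can be arranged through agreement on the boundary layers. First I would invoke the extension already described above: $f$ extends to a discrete harmonic function $\tilde f$ on all of $\Z^n$ (the strip-by-strip marching of the concluding remarks). This global extension is exactly what supplies consistent data on a horizontal range much wider than the box itself, which the ill-posed discrete Cauchy problem needs if its (shrinking) domain of dependence is to cover the whole box.

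Writing $x=(x',x_n)$ with $x'\in\Z^{n-1}$, I would fix the two bottom layers $x_n=-N$ and $x_n=-N+1$ and let $g_0,g_1$ be the tensor-product Lagrange interpolants of $\tilde f(\cdot,-N)$ and $\tilde f(\cdot,-N+1)$ on the grid $\{x':|x'_j|\le 3N\}$, so that each has degree at most $6N$ in every variable and total degree at most $6N(n-1)$. Then I would define $P$ to be the discrete harmonic function on $\Z^n$ determined by these two layers through the layer recursion obtained by solving the harmonic equation for the top neighbor, $P(x',k+1)=2nP(x',k)-\sum_{j=1}^{n-1}[P(x'+e_j,k)+P(x'-e_j,k)]-P(x',k-1)$, which propagates $g_0,g_1$ to every layer and is the unique continuation.

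The heart of the matter is to show this continuation is a polynomial of total degree at most $6N(n-1)+1$. Let $\Delta'=\sum_{j=1}^{n-1}\delta_j^2$ be the horizontal discrete Laplacian, so that harmonicity reads $\delta_n^2P=-\Delta'P$. Because $\Delta'$ lowers the total degree of a polynomial in $x'$ by at least two, it is nilpotent on every space of polynomials of bounded degree, and I would seek $P$ in the form $P=\sum_{j\ge0}(\Delta')^j[A_j(x_n)g_0+B_j(x_n)g_1]$. Substituting into $\delta_n^2P=-\Delta'P$ and matching powers of $\Delta'$ reduces the problem to the one-variable recurrences $\delta_n^2A_j=-A_{j-1}$ and $\delta_n^2B_j=-B_{j-1}$ with two-point conditions making $A_0,B_0$ affine; since discrete double antidifferencing raises the degree by two, $\deg A_j=\deg B_j=2j+1$, and the sum terminates at $j=\lfloor d/2\rfloor$. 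The decisive trade-off is that the term carrying $(\Delta')^j$ has $x'$-degree at most $d-2j$ and $x_n$-degree at most $2j+1$, hence total degree at most $d+1$; with $d=\max(\deg g_0,\deg g_1)\le 6N(n-1)$ this gives $\deg P\le 6N(n-1)+1$, the stated bound, and for $n=1$ it recovers the fact that discrete harmonic functions are affine.

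Finally I would verify $P=f$ on $K_N^{(n)}$ by a cone argument. The functions $P$ and $\tilde f$ are both discrete harmonic on $\Z^n$ and agree at the grid points of the layers $x_n=-N,-N+1$ with $|x'_j|\le 3N$; since both obey the same recursion, they must agree on the layer $x_n=-N+m$ for $|x'_j|\le 3N-(m-1)$, and as $m$ ranges over $0,\dots,2N$ this cone contains the entire box, so $P=\tilde f=f$ on $K_N^{(n)}$. I expect the main obstacle to be precisely the degree control of the third paragraph: one must choose the Cauchy width (here $3N$) large enough that the one-sided cone covers the box, yet small enough that the interpolation degree, after the $x'$-versus-$x_n$ trade-off forced by the nilpotency of $\Delta'$, still fits under $6N(n-1)+1$. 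Making these two requirements compatible at once is what pins down the construction and the constant in the degree bound.
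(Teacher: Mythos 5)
Your proposal is correct, and its outer skeleton is the same as the paper's: widen the box by the strip-extension of the concluding remarks, observe that the values of the extension on two layers $K_{3N}^{(n-1)}\times\{-N,-N+1\}$ determine the function on all of $K_N^{(n)}$ (the paper asserts this domain-of-dependence fact without proof; your explicit shrinking-width cone computation, ending at width $N+1\ge N$ on the top layer, is precisely its justification), interpolate those two layers by polynomials of total degree at most $6N(n-1)$, and then extend the two polynomial layers to a discrete harmonic polynomial losing only one degree. Where you genuinely diverge is in the proof of this last step, which is the paper's key Lemma. The paper seeks $P(x,x_n)=\sum_{j}q_j(x_n)Q_j(x)$ with \emph{fixed} one-variable polynomials $q_j$ satisfying $\Delta_d q_j(t)=t^{j-2}$, $q_j(0)=q_j(1)=0$, and \emph{unknown} polynomials $Q_j$ of $n-1$ variables; harmonicity then becomes a square linear system in the coefficients of the $Q_j$, and solvability is proved non-constructively by showing the homogeneous system has only the trivial solution (a highest-degree contradiction). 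You swap the roles of the two factors: the transverse dependence is explicit, $P=\sum_{j}(\Delta')^j\bigl[A_j(x_n)g_0+B_j(x_n)g_1\bigr]$, and the unknowns are the one-variable polynomials $A_j,B_j$, which are produced directly by discrete double antidifferencing from the recurrences $\delta_n^2A_j=-A_{j-1}$, $\delta_n^2B_j=-B_{j-1}$ with vanishing two-point data (and matching powers of $\Delta'$ is a \emph{sufficient} telescoping condition for harmonicity, so no independence claim is needed). Nilpotency of $\Delta'$ on polynomials terminates the sum, and your degree bookkeeping --- each power of $\Delta'$ costs two degrees in $x'$ and buys at most $2j+1$ degrees in $x_n$ --- gives the same bound $\deg P\le d+1\le 6N(n-1)+1$. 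The net comparison: your argument is fully constructive, yielding a closed-form solution of the ill-posed two-layer Cauchy problem, whereas the paper's is a pure existence argument via linear algebra; both give exactly the stated degree, and both rest on the same unproved (but standard) marching extension of $f$ beyond the original box.
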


In dimension $2$ this proposition was proved in \cite{H}, where the estimate for the degree is precise, the details can be found in \cite{GPHD}. In higher dimensions we suggest a different argument, now the degree of the polynomial is not optimal.

We first claim that there exists a function $g$ discrete harmonic in $K_{3N,N}^{(n)}$ and such that $f=g$ in $K_N^{(n)}$. Further, values of $g$ on $K_N^{(n)}$ are determined by its values on two squares $K_{3N}^{(n-1)}\times\{-N+1,-N\}$ and we will be done if we show that there exists a discrete harmonic polynomial that coincides with $g$ on the set $K_{3N}^{(n-1)}\times\{-N+1,-N\}$.  By shifting the last variable we may instead consider the set $K_{3N}^{(n-1)}\times\{0,1\}$. Further we can find two polynomials $G_0$ and $G_1$ of $n-1$ variables such that $g(x,0)=G_0(x)$ and $g(x,1)=G_1(x)$ when $x\in \Z^{n-1}$ (this is standard multivariate polynomial interpolation on a grid), $G_0, G_1$ can be chosen of degree less than or equal to $6N(n-1)$. For the details we refer the reader to \cite[Chapter 4]{DB}. So we have reduced the Theorem to the following statement

\begin{lemma} Let $G_0$ and $G_1$ be polynomials  of $n-1$ variables with degree less than or equal to $M$. There exists a discrete harmonic polynomial $P$ on $\Z^n$ such that $P(x,0)=G_0(x)$ and $P(x,1)=G_1(x)$ for $x\in K_{3N}^{(n-1)}$ and the degree of $P$ is less than or equal to $M+1$.
\end{lemma}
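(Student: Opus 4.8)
The plan is to read discrete harmonicity on $\Z^n$ as a second order recurrence across the layers $x_n=\mathrm{const}$ and to solve it explicitly. Writing $x=(x',x_n)$ with $x'\in\Z^{n-1}$ and letting $\Lambda$ denote the discrete Laplacian in the first $n-1$ variables, $\Lambda v(x')=\sum_{j=1}^{n-1}\bigl(v(x'+e_j)+v(x'-e_j)\bigr)-2(n-1)v(x')$, the equation $\Delta P=0$ is equivalent, layer by layer, to
\[
u_{k+1}=(2-\Lambda)u_k-u_{k-1},\qquad u_k(x'):=P(x',k).
\]
Setting $\nu=\Lambda/2$ this reads $u_{k+1}=2(1-\nu)u_k-u_{k-1}$ with prescribed initial layers $u_0=G_0$, $u_1=G_1$. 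The decisive observation is that $\Lambda$ sends polynomials of degree $\le d$ in $x'$ to polynomials of degree $\le d-2$, since the two top homogeneous parts cancel in every second difference; hence on the finite dimensional space $V_M$ of polynomials of degree $\le M$ the operator $\nu$ is nilpotent, with $\nu^{\lfloor M/2\rfloor+1}=0$. As $G_0,G_1\in V_M$ and $\Lambda$ preserves $V_M$, every $u_k$ produced by running the recurrence forward and backward again lies in $V_M$.

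Next I would solve the recurrence in closed form and, crucially, show that $k\mapsto u_k$ is polynomial in $k$. Writing it as $\binom{u_{k+1}}{u_k}=\mathcal T\binom{u_k}{u_{k-1}}$ with $\mathcal T=\left(\begin{smallmatrix}2(1-\nu)&-1\\1&0\end{smallmatrix}\right)$, one gets $\binom{u_{k+1}}{u_k}=\mathcal T^{k}\binom{G_1}{G_0}$. Now $\mathcal T=I+\mathcal N$, and a direct computation gives $\mathcal N^{2}=2\nu S$ for the fixed operator matrix $S=\left(\begin{smallmatrix}-2(1-\nu)&1\\-1&0\end{smallmatrix}\right)$; since all these operators are polynomials in $\Lambda$ and hence commute, and $\nu$ is nilpotent on $V_M$, the matrix $\mathcal N$ is nilpotent there, say $\mathcal N^{D}=0$. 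Therefore $\mathcal T^{k}=\sum_{i=0}^{D-1}\binom{k}{i}\mathcal N^{i}$ is a polynomial in $k$ with operator coefficients. (Equivalently one may write $u_k=T_k(1-\nu)G_0+U_{k-1}(1-\nu)\bigl(G_1-(1-\nu)G_0\bigr)$ with $T_k,U_{k-1}$ the Chebyshev polynomials; only even powers of $\theta=\arccos(1-\nu)$ occur, so these expressions are genuine polynomials in $k$ once $\nu$ is nilpotent.) Replacing $k$ by the formal variable $x_n$ then defines $P(x',x_n)$ as an honest polynomial in all $n$ variables. The boundary conditions are immediate from $\mathcal T^{0}=I$ and $\mathcal T^{1}=\mathcal T$ (equivalently $T_0=1$, $U_{-1}=0$, $T_1=1-\nu$, $U_0=1$), giving $P(x',0)=G_0$ and $P(x',1)=G_1$ identically, in particular on $K_{3N}^{(n-1)}$. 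Harmonicity is just the statement that the recurrence holds for $P$: both sides of $u_{k+1}-2(1-\nu)u_k+u_{k-1}=0$ are polynomials in $k$ vanishing at every integer $k$, hence vanish identically, so $\Delta P=0$ at every lattice point.

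The remaining point, which I expect to be the real work, is the degree bound $\deg P\le M+1$, obtained by tracking the competition between powers of $k$ and the degree lowering of $\nu$. The even part $\mathcal N^{2j}=(2\nu)^{j}S^{j}$ lowers the $x'$-degree by at least $2j$ while carrying $\binom{k}{2j}$ of degree $2j$ in $x_n$, so it contributes total degree $\le M$; the odd part $\mathcal N^{2j+1}=\mathcal N(2\nu)^{j}S^{j}$ lowers the $x'$-degree by at least $2j$ while carrying $\binom{k}{2j+1}$ of degree $2j+1$, costing exactly one extra degree and yielding the bound $M+1$. Consequently every term of $P$ has combined degree at most $M+1$, and $P$ is the desired discrete harmonic polynomial extension. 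The only genuinely delicate step is making this accounting precise, i.e. verifying that $\mathcal N^{2j}$ really starts at order $\nu^{j}$ (so that each factor of $2j$ powers of $k$ is compensated by a degree drop of $2j$ in $x'$), which is exactly what the identity $\mathcal N^{2}=2\nu S$ together with the nilpotency of $\nu$ provides.
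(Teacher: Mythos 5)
Your proof is correct, and it takes a genuinely different route from the paper's. The paper posits the ansatz $P(x,x_n)=\sum_{j=0}^{M+1}q_j(x_n)Q_j(x)$, where the one-variable polynomials $q_j$ are chosen so that $\Delta_d q_j(t)=t^{j-2}$ and $q_j(0)=q_j(1)=0$; harmonicity then becomes a square linear system for the unknown coefficient polynomials $Q_2,\dots,Q_{M+1}$, and existence is deduced from uniqueness: a nonzero solution of the homogeneous system would satisfy $Q_l=-\sum_{k\ge l-2}c_{k,l-2}\Delta_d Q_k$ with $Q_l$ of maximal degree, contradicting the fact that $\Delta_d$ lowers degree. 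You instead solve the layer recurrence in closed form via the transfer matrix $\mathcal{T}=I+\mathcal{N}$, observing that the degree-lowering of $\Lambda$ makes $\nu=\Lambda/2$, and hence $\mathcal{N}$, nilpotent on polynomials of bounded degree, so that $\mathcal{T}^k=\sum_i\binom{k}{i}\mathcal{N}^i$ depends polynomially on $k$ and $P$ is given by an explicit formula. Both arguments ultimately rest on the same structural fact --- the discrete Laplacian maps polynomials of degree $d$ to polynomials of degree at most $d-2$ --- but the paper uses it for a contradiction inside a nonconstructive square-system argument, while you use it as nilpotency, which buys constructiveness (an explicit extension operator) and a transparent term-by-term degree bound: $\binom{k}{i}$ contributes degree $i$ in $x_n$ against a drop of at least $2\lfloor i/2\rfloor$ in the $x'$-degree coming from $\mathcal{N}^{2j}=(2\nu)^jS^j$, whence $\deg P\le M+1$. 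Your computations check out; in fact $S=-\mathcal{T}$, so the identity $\mathcal{N}^2=2\nu S$ is just Cayley--Hamilton for $\mathcal{T}$ (trace $2(1-\nu)$, determinant $1$) and the commutation you invoke is automatic; the final step (for fixed $x'$, the defect $P(x',k+1)-2(1-\nu)P(x',k)+P(x',k-1)$ is a polynomial in $k$ vanishing at all integers $k\ge 1$, hence identically) correctly yields harmonicity on all of $\Z^n$.
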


\begin{proof}
We first find polynomials of one variable $q_j(t)=c_{j,j}t^j+c_{j,j-1}t^{j-1}+...+c_{j,1}t$ for $j\ge 2$ that satisfy \[\Delta_d q_j(t)=q_j(t+1)+q_j(t-1)-2q_j(t)=t^{j-2},\]   $q_j(0)=0$ and $q_j(1)=0$.
We also let $q_0=1$ and $q_1=t$.
Now we look for $P(x,x_n)$  in the form
\[P(x,x_n)=\sum_{j=0}^{M+1}q_j(x_n) Q_j(x),\]
and the conditions become $\Delta_d P(x,x_n)=0$, $P(x,0)=Q_0(x)=G_0(x)$, and $P(x,1)=Q_0(x)+Q_1(x)=G_1(x)$.
We have
\begin{multline*}
\Delta_d P(x,x_n)=\sum_{j=0}^{M+1}\left(\Delta_dq_j(x_n) Q_j(x)+ q_j(x_n)\Delta_d Q_j(x)\right)=\\
\sum_{j=2}^{M+1} x_n^{j-2} Q_j(x)+ \sum_{j=0}^{M+1}q_j(x_n)\Delta_d Q_j(x)=\\
\sum_{j=0}^{M-1} x_n^{j}\left(Q_{j+2}(x)+\sum_{k=j}^{M+1} c_{k,j}\Delta_d Q_k(x)\right)+\\x_n^{M}\sum^{M+1}_{M}c_{k,M}\Delta_dQ_{k}(x)+x_n^{M+1}c_{M+1,M+1}\Delta_dQ_{M+1}(x).
\end{multline*}
Now  $Q_0$ and $Q_1$ are given polynomials of degree less than or equal to $M$, it is sufficient to find sequence of polynomials $Q_2,..., Q_{M+1}$ such that the degree of $Q_j$ is less than or equal to $M-j+1$ and
\[
 Q_{j+2}(x)+\sum_{k=j}^{M+1} c_{k,j}\Delta_d Q_k=0 \quad j=0,...,M+1.\]
Now, by comparing coefficients we treat the equations as a linear system. Each polynomial $Q_j$ gives us unknowns (coefficients) and the total number of unknowns we get is
\[  \sum_{k=0}^{M-1}{n-1+k \choose n-1}.\]
The number of equations is exactly the same. The right-hand sides for this linear system come from given polynomials $Q_0$ and $Q_1$. To show that there is a solution, we have to show that $Q_0=Q_1=0$ gives only trivial solution $Q_2=...=Q_{M+1}=0$. If there exists a non-trivial solution, we choose polynomial $Q_l, \ l\geq2$  that has the highest degree. We have
\[
  Q_l=-\sum_{k=l-2}^{M+1} c_{k,l-2}\Delta_d Q_k,\]
that leads to a contradiction since the degree of the polynomial on the left-hand side is greater than the degree of the one on the right-hand side. 
\end{proof} 

 \subsection*{Acknowledgments}
  The authors are supported by the Research Council of Norway grants 185359/V30 and 213638. The second author would like to thank Center of Advanced Study at the Norwegian Academy of Science and Letters in Oslo where the work was completed.


\begin{thebibliography}{10}

\bibitem{A}  S. Agmon, {\it Unicit\'{e} et convexit\'{e} dans les probl\`{e}mes diff\'{e}rentiels,} S\'{e}minaire de Math\'{e}matiques Sup\'{e}rieures, no. 13 (\'{E}t\'{e}, 1965), Les Presses de l'Universit\'{e} de Montr\'{e}al, Montreal, 1966.
 

\bibitem{ARRV} G. Alessandrini, L. Rondi, E. Rosset, S. Vessella,{\it The stability for the Cauchy problem for elliptic equations,} Inverse Problems {\bf 25} (2009), 123004.

\bibitem{BHL} F. Boyer, F. Hubert, J. Le Rousseau, {\it Discrete Carleman estimates for elliptic operators in arbitrary dimension and applications,} SIAM J. Control Optim. {\bf 48} (2010), 5357--5397.

\bibitem{DB} G. Dahlquist, \AA. Bj\"{o}rck, Numerical methods in Scientific Computing, Vol. 1, SIAM, 2008.

\bibitem{EG} S. Ervedoza, F. de Gournay, {\it Uniform stability estimates for the discrete Calderon problems,} Inverse problems {\bf 27} (2011), 125012.

\bibitem{FM} R. S. Falk and P. B. Monk, {\it Logarithmic convexity for discrete harmonic functions and approximation of the Cauchy problem for Poisson's equation,} Math. Comp. {\bf 47} (1986), 135--149.

\bibitem{GL} N. Garofalo, F. Lin, {\it Monotonicity properties of variational integrals, $A_p$ weights and unique continuation,} Indiana Univ. Math. J. {\bf 35} (1986), 245--268.

\bibitem{G} M. Guadie, {\it Stability estimates for discrete harmonic functions on product domains,} Appl. Anal. Discrete Math. {\bf 7} (2013), 143--160.

\bibitem{GPHD} M. Gaudie, PhD thesis, NTNU, Trondheim, submitted June 2013.

\bibitem{GM} M. Gaudie, E. Malinnikova, {\it Stability and regularization for determining sets of discrete Laplacian,} to appear in Inverse Problems.

\bibitem{H} H. A. Heilbronn,{\it  On discrete harmonic functions,} Proc. Cambridge Philos. Soc. {\bf 45} (1949), 194--206.

\bibitem{KS} M. V. Klibanov, F. Santosa, {\it A computational quasi-reversibility method for Cauchy problems for Laplace's equation,}
SIAM J. Appl. Math. {\bf 51} (1991), 1653--1675.

\bibitem{KM} J. Korevaar, J. L. H. Meyers,{\it  Logarithmic convexity for supremum norms of harmonic functions,} Bull. London Math. Soc. {\bf 26} (1994), 353--362.

\bibitem{M} E. Malinnikova,  {\it Propagation of smallness for solutions of generalized Cauchy-Riemann systems,} Proc. Edinb. Math. Soc. {\bf 47} (2004), 191--204.

\bibitem{RHH} H.-J. Reinhardt, H. Han and D. N. H\'{a}o, {\it Stability and regularization of a discrete approximation to the Cauchy problem for Laplace's equation,} SIAM J. Numer. Anal. {\bf 36} (1999),  890--905.




\end{thebibliography}
\end{document}